\theoremstyle{plain}
\newtheorem{thm}{Theorem}[section]
\newtheorem{lem}[thm]{Lemma}
\newtheorem{prop}[thm]{Proposition}
\newtheorem{cor}[thm]{Corollary}
\newtheorem{rem}[thm]{Remark}
\theoremstyle{definition}
\newtheorem{exmp}[thm]{Example}
\newcommand{\Rmnum}[1]{\expandafter\@slowromancap\romannumeral #1@}
\newcommand{\la}{\lambda}
\numberwithin{equation}{section} \errorcontextlines=0
\numberwithin{equation}{section} \errorcontextlines=0
\def\b{\mathfrak{b}}
\begin{document}
\title{Dual Murnaghan-Nakayama rule for Hecke algebras in Type $A$}
\author{Naihuan Jing}
\address{Department of Mathematics, North Carolina State University, Raleigh, NC 27695, USA}
\email{jing@ncsu.edu}
\author{Ning Liu}
\address{Beijing International Center for Mathematical Research, Peking University, Beijing 100871, China}
\email{mathliu123@outlook.com}
\author{Yu Wu}
\address{Shenzhen International Center for Mathematics, Southern University of Science and Technology, Shenzhen, Guangdong 518055, China}
\email{wywymath@163.com}
\subjclass[2020]{Primary: 20C08; Secondary: 17B69, 05E10}\keywords{Hecke algebras, irreducible characters, Murnaghan-Nakayama rule, vertex operators, brick tabloids}

\begin{abstract}
Let $\chi^{\lambda}_{\mu}$ be the value of the irreducible character $\chi^{\lambda}$ of the Hecke algebra of the symmetric group on the conjugacy class of type $\mu$. The usual Murnaghan-Nakayama rule provides an iterative algorithm based on reduction of the lower partition $\mu$. In this paper, we establish a dual Murnaghan-Nakayama rule for Hecke algebras of type $A$ using vertex operators by applying reduction to the upper partition $\lambda$. We formulate an explicit recursion of the dual Murnaghan-Nakayama rule by employing the combinatorial model of ``brick tabloids", which refines a previous result by two of us (J. Algebra 598 (2022), 24--47).

\end{abstract}
\maketitle


\section{Introduction}
The Murnaghan-Nakayama rule, introduced independently by Murnaghan and Nakayama, provides a combinatorial method for computing the irreducible characters of the symmetric group $\mathfrak S_n$ \cite{Mur,Nak1,Nak2}.
In 1991, Ram \cite{Ram} employed the quantum Schur-Weyl duality \cite{Jim} to prove the Frobenius type character formula for the Iwahori-Hecke algebra $H_n(q)$ of type $A$ (see also \cite{KV,KW}). Furthermore, by this Frobenius formula he formulated a combinatorial $q$-Murnaghan-Nakayama rule for computing the irreducible characters of $H_n(q)$. Ram's $q$-Murnaghan-Nakayama rule can also be proved by induced characters of Coxeter elements and Kostka numbers \cite{Pfe}. Since Ram's work, there have been various discussions and generalizations on the rule and characters of Hecke algebras \cite{Gek,Hal,JL2,Roi,Sho,van}.

Recently, two of us \cite{JL1}  used the vertex operator realization of Schur functions to revisit this $q$-Murnaghan-Nakayama rule for the irreducible characters of $H_n(q)$. Additionally, in a dual picture, an iterative formula on upper partitions was obtained, which can be viewed  as the dual Murnaghan-Nakayama rule. It is worth noting that the dual version of the Murnaghan-Nakayama rule relies on the transition coefficient $C_{m,\rho}$ between the elementary symmetric function $e_m(x)$ and the generalized complete symmetric function $q_{\rho}(x;t)$ (see \cite[Theorem 2.14]{JL1}). 

The goal of this paper is to give a combinatorial interpretation of the coefficient $C_{m,\rho}$. More precisely, we express $C_{m,\rho}$ as a sum of rational functions in $q$ over all brick tabloids of size $m$; see Lemma \ref{l:S*} and Corollary \ref{c:Cmrho}. This leads to a refinement of the dual Murnaghan-Nakayama rule obtained in \cite{JL1}. In contrast with the dual recursion in \cite{JL1}, which involves the transition coefficients from $e_m$ to the basis $\{q_{\rho}\}$, the present formulation makes these coefficients explicit by means of brick tabloids. In this sense, the new rule is more direct and more transparent from a combinatorial point of view.

The resulting formula yields an explicit recursive algorithm for computing irreducible characters by reducing the upper partition $\lambda$ to $\lambda^{[1]}$ at each step; see Theorem \ref{iterative}. This upper-partition recursion is particularly convenient in examples where $\lambda_1$ is large; see Examples \ref{ex:Cmrho}, \ref{ex:example6}, and \ref{ex:811}.

The paper is organized as follows. In Section \ref{s:Pre}, we review partitions, brick tabloids, symmetric functions, the Frobenius character formula for $H_n(q)$, and the vertex operator realization of Schur functions. In Section \ref{s:OV}, we give a combinatorial interpretation of $C_{m,\rho}$, derive the refined dual Murnaghan-Nakayama rule, and present several examples.



\section{Preliminaries}\label{s:Pre}
\subsection{Partitions, brick tabloids and symmetric functions}
We first recall some basic terminologies about partitions and brick tabloids, mainly following \cite{Mac} and \cite{ER} respectively. A {\em partition} $\la=(\la_1,\la_2,\ldots)$ of {\em weight} $n$, denoted by $\la\vdash n$, is a finite sequence of descending nonnegative integers such that $|\la|=\sum_i\la_i=n$, where nonzero $\la_i$ are the {\em parts} of $\la$, the {\em length} $\ell(\la)$ is the number of the parts. When the parts are not necessarily descending, $\la$ is called a {\em composition} of $n$, denoted by $\la\vDash n$. We denote by $m_i(\la)$ the multiplicity of part $i$ in $\la$. The set of all partitions (resp. compositions) will be denoted by $\mathscr P$ (resp. $\mathscr C$). We also denote the set of all partitions (resp. compositions) of weight $n$ by $\mathscr{P}_n$ (resp. $\mathscr{C}_n$). When $\mu=(\mu_1,\ldots,\mu_s)$ is fixed, we will also use
\begin{align}
    \mathscr C_{\mu}:=\{\,\tau=(\tau_1,\ldots,\tau_s)\in \mathbb Z_{\ge 0}^{\,s}\mid 0\le \tau_i\le \mu_i \text{ for all } i\,\}.
\end{align}
For $\tau\in\mathscr C_{\mu}$, we write $|\tau|=\sum_i\tau_i$, and $\ell(\tau)$ denotes the number of nonzero parts of $\tau$.
For two partitions (resp.
compositions) $\mu,\nu$, we call $\tau\subset\mu$ if $\tau_i\leq \mu_i$ for all $i\geq1$. 
For a partition $\la=(\la_1,\la_2,\ldots,\la_\ell)$, we define 
\begin{align}
    \la^{[i]}=(\la_{i+1},\la_{i+2},\ldots,\la_\ell), \quad\text{for $1\leq i\leq \ell-1$.}
\end{align}

A {\em brick tabloid} of $n$ is a row of $n$ boxes, formed by the disjoint union of some bricks with lengths $b_i$ such that $\sum_{i}b_i=n$. For instance, the following is a brick tabloid of $10$.
\begin{figure}[H]
\begin{tikzpicture}[scale=1]
    \coordinate (Origin)   at (0,0);
    \coordinate (XAxisMin) at (0,0);
    \coordinate (XAxisMax) at (11,0);
    \coordinate (YAxisMin) at (0,0);
    \coordinate (YAxisMax) at (0,-11);
    \draw [thin, black] (0,0) -- (10,0);
    \draw [thin, black] (0,1) --(10,1);
    \draw [thin, black] (0,0) -- (0,1);
    \draw [thin, black] (1,0) -- (1,1);
    \draw [thin, black] (2,0) -- (2,1);
    \draw [thin, black] (3,0) -- (3,1);
    \draw [thin, black] (4,0) -- (4,1);
    \draw [thin, black] (5,0) -- (5,1);
     \draw [thin, black] (6,0) -- (6,1);
     \draw [thin, black] (7,0) -- (7,1);
     \draw [thin, black] (8,0) -- (8,1);
     \draw [thin, black] (9,0) -- (9,1);
      \draw [thin, black] (10,0) -- (10,1);
      \filldraw [fill=green, fill opacity=0.6, rounded corners]
    (0.5,0.3) rectangle (4.5,0.7) (5.5,0.3) rectangle (6.5,0.7) (7.5,0.3) rectangle (9.5,0.7);
     \end{tikzpicture}
    \caption{A brick tabloid of $10$ with $b_1=5$, $b_2=2$ and $b_3=3$.}\label{f:1}
    \end{figure}
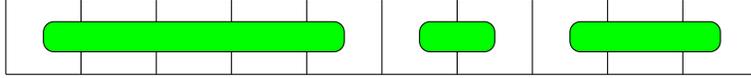

We denote by $\mathscr{B}_n$ the set of all brick tabloids of $n$. For a given brick tabloid $\b$, let $\ell(\b)$ be the number of bricks in $\b$ and $r_i(\b)$ the total number of boxes in the first $i$ bricks of $\b$, i.e., $r_i(\b):=\sum_{j\leq i}b_j$. We assume for convenience that $\varnothing$ is a brick tabloid of $0$ and $\ell(\varnothing)=0$. For a partition $\la$, a brick tabloid $\b$ is of type $\la$ if the partition obtained by rearranging the bricks of $\b$ is exactly $\la$. The brick tabloid in Fig. \ref{f:1} is of type $(5,3,2)$.  

Let $\Lambda_{\mathbb C}$ be the ring of symmetric functions over $\mathbb C$ in the variables $x_1,x_2,\ldots$. Let $p_r=\sum_i x_i^r$ be the $r$-th power-sum symmetric function. Then $\{p_{\lambda}=p_{\lambda_1}p_{\lambda_2}\cdots\mid \lambda\in\mathscr P\}$ forms a $\mathbb C$-basis of $\Lambda_{\mathbb C}$. Clearly, $p_r$ has degree $r$. In particular, if $\Lambda_{\mathbb C}^n$ denotes the homogeneous component of degree $n$, then $\{p_{\lambda}\mid \lambda\in\mathscr P_n\}$ forms an orthogonal basis of $\Lambda_{\mathbb C}^n$ with respect to
\begin{align}\label{e:inner}
    \langle p_\lambda,p_\mu \rangle=z_{\lambda}\delta_{\lambda\mu},
\end{align}
where $z_{\lambda}:=\prod_{i\geq 1}i^{m_i(\lambda)}m_i(\lambda)!$.
For $\lambda\in\mathscr P_n$, the Schur function $s_{\lambda}(x)$ is given by
\begin{align}
    s_\lambda(x)=\sum_{\mu\vdash n}\omega^{\lambda}(\mu)z_{\mu}^{-1}p_{\mu}.
\end{align}
Where $\omega^{\la}(\mu)$ is the irreducible character value of the symmetric group $\mathfrak S_n$ labled by $\la$ acting on the conjugacy class of type $\mu$.

It is well known that $\{s_{\lambda}\mid \lambda\in\mathscr P_n\}$ forms an orthonormal basis of $\Lambda_{\mathbb C}^n$ with respect to \eqref{e:inner}, namely,
\begin{align}\label{e:orthogonal}
    \langle s_\lambda,s_\mu \rangle=\delta_{\lambda\mu}.
\end{align}
The generalized complete symmetric function (that is, the one-row Hall-Littlewood function) $q_n(x;t)$ with parameter $t$ is defined by
\begin{equation}\label{e:generating-q}
Q(z;t)=\exp\left(\sum_{n=1}^{\infty}\frac{1-t^{n}}{n}p_nz^n\right)=\sum_{n=0}^{\infty}q_n(x;t)z^n.
\end{equation}
More explicitly,
\begin{align}
    q_n(x;t)=\sum_{\lambda\vdash n}\frac{1}{z_{\lambda}(t)}p_{\lambda},
\end{align}
where $z_{\lambda}(t)=z_{\lambda}/\prod_i(1-t^{\lambda_i})$. For any partition $\lambda$, we define
\begin{align}
    q_{\lambda}(x;t):=q_{\lambda_1}(x;t)q_{\lambda_2}(x;t)\cdots,
\end{align}
and the set $\{q_{\lambda}(x;t)\mid \lambda\in\mathscr P\}$ forms a $\mathbb C(t)$-basis of $\Lambda_{\mathbb C(t)}$.

\subsection{Frobenius character formula for Hecke algebras in type $A$}

The Iwahori--Hecke algebra $H_n(q)$, where $q$ is a fixed complex parameter, is the unital associative algebra generated by $T_1,T_2,\ldots,T_{n-1}$ subject to the relations
\begin{align}
    \begin{split}
        T_iT_j&=T_jT_i, \quad \mbox{if $|i-j|\geq 2$,}   \\
        T_iT_{i+1}T_i&=T_{i+1}T_iT_{i+1},\\
        T_i^2&=(q-1)T_i+q.
    \end{split}
\end{align}

For each $w\in S_n$ denote 
$T_w=T_{i_1}T_{i_2}\ldots T_{i_k}$ for any reduced expression of $w$. It is well-known that $T_w$ is independent from the choice of reduced expressions of $w$, and $\{T_w|w\in S_n\}$ forms a linear basis of $H_n(q)$.
Now for each partition $\mu=(\mu_1\ldots\mu_l)\vdash n$, let $T_{\gamma_{\mu}}=T_{\sigma_{\mu_1}}\ldots T_{\sigma_{\mu_l}}$ be the standard elements associated with the partition $\mu$. Here
$T_{\sigma}$ is the element attached to the cyclic permutation $\sigma$, and $T_{\sigma_{\mu_i}}$ are supported on disjoint indices $\mu_1+\cdots+\mu_{i-1}+1, \mu_1+\cdots+\mu_{i-1}+2, \ldots, \mu_1+\cdots+\mu_{i}$.

The complex irreducible representations of $H_n(q)$ are labeled by partitions of $n$. The trace of an irreducible representation is also referred to as its character. Ram \cite{Ram} proved a Frobenius-type character formula for $H_n(q)$ in terms of one-row Hall-Littlewood functions and Schur functions by using quantum Schur-Weyl duality. Any irreducible character $\chi^{\lambda}$ of $H_n(q)$ is completely determined by its values on the standard elements $T_{\gamma_{\mu}}$ (\(\mu\vdash n\)); see \cite{Ca,Ram}. We denote the value of $\chi^{\lambda}$ on $T_{\gamma_{\mu}}$ by $\chi^{\lambda}_{\mu}(q)$.

Introduce the modification
\begin{align}
    \widetilde q_r(x;t):=\frac{t^r}{t-1}q_r(x;t^{-1}),
    \qquad
    \widetilde q_{\lambda}(x;t):=\prod_{i=1}^{\ell(\lambda)}\widetilde q_{\lambda_i}(x;t)
    =\frac{t^{|\lambda|}}{(t-1)^{\ell(\lambda)}}q_{\lambda}(x;t^{-1}).
\end{align}

\begin{prop}\cite{Ram}
For each $\mu\vdash n$, one has
\begin{align}
    \widetilde q_{\mu}(x;q)=\sum_{\lambda\vdash n}\chi^{\lambda}_{\mu}(q)s_{\lambda}(x).
\end{align}
\end{prop}

Thanks to \eqref{e:orthogonal}, it follows that
\begin{align}\label{formula}
    \chi^{\lambda}_{\mu}(q)=\langle \widetilde q_{\mu}(x;q),s_{\lambda}(x) \rangle
    =\frac{q^n}{(q-1)^{\ell(\mu)}}\langle q_{\mu}(x;q^{-1}),s_\lambda(x) \rangle.
\end{align}

\subsection{Vertex operator realization of Schur functions}
Let $\mathfrak{h}$ be the infinite dimensional Heisenberg algebra over $\mathbb{C}$ with generators $h_n$, $n\in\mathbb{Z}\setminus\{0\}$ and the central element $c$ subject to the relation:
\begin{align}\label{e:hrel}
[h_m, h_n]=\delta_{m,-n}m\cdot c.
\end{align}

As is well known, there is a basic representation realized on the space $V=Sym(\mathfrak{h}^-)$ for the algebra $\mathfrak{h}$, where $V$ denotes the symmetric algebra generated by the elements $h_{-n}$, $n\in\mathbb{N}$. Explicitly, for any $n\in\mathbb{N}$, the element $h_{-n}$ acts as the multiplication by $h_{-n}$, while $h_n$ acts as the differentiation operator $n\frac{\partial}{\partial h_{-n}}$, which then satisfy the relation \eqref{e:hrel} with $c=1$.

Define the canonical inner product on $V$ by
\begin{align}\label{e:inner1}
\langle h_{-\la}, h_{-\mu} \rangle_{V}=\delta_{\la\mu}z_{\la},
\end{align}
where $h_{-\la}:=h_{-\la_1}h_{-\la_2}\cdots$

Therefore, $h_{-n}$ and $h_{n}$ are dual to each other with respect to \eqref{e:inner1}. Namely,
\begin{align}\label{e:dual1}
\langle h_{-n}f, g \rangle_{V}=\langle f, h_ng \rangle_{V} \quad\text{for any $f,g\in V$}.
\end{align}

Define the vertex operators $X_n$ and $X^*_n$: $V\longrightarrow V[[z, z^{-1}]]$ by
\begin{align}\label{e:Schurop1}
X(z)&=\mbox{exp} \left( \sum\limits_{n\geq 1} \dfrac{1}{n}h_{-n}z^{n} \right) \mbox{exp} \left( -\sum \limits_{n\geq 1} \dfrac{1}{n}h_{n}z^{-n} \right)=\sum_{n\in\mathbb{Z}}X_nz^n,\\ \label{e:Schurop*1}
X^*(z)&=\mbox{exp} \left(-\sum\limits_{n\geq 1} \dfrac{1}{n}h_{-n}z^{n} \right) \mbox{exp} \left(\sum \limits_{n\geq 1} \dfrac{1}{n}h_{n}z^{-n} \right)=\sum_{n\in\mathbb{Z}}X^*_nz^{-n}.
\end{align}
Recall that $\Lambda_{\mathbb{C}}$ denotes the space of symmetric functions. The {\it characteristic mapping} $\iota$ from $V$ to $\Lambda_{\mathbb{C}}$ is defined by
\begin{align}
\iota(h_{-\la}):=p_{\la_1}\cdots p_{\la_\ell}=p_{\la} \quad \text{for any partition $\la$}.
\end{align}
By definition, $\iota(h_n)=\iota(n\frac{\partial}{\partial h_{-n}})=n\frac{\partial}{\partial p_n}:=p^*_n$. It is shown \cite[Theorem 3.6]{J2} that $\iota$ is an isometric isomorphism between $V$ and $\Lambda_{\mathbb{C}}$.

It follows from \eqref{e:dual1} that
\begin{align}\label{e:star}
\langle p_{n}f, g \rangle=\langle f, p_n^*g \rangle \quad\text{for any $f,g\in \Lambda_{\mathbb{C}}$}.
\end{align}
Note that $*$ is $\mathbb C$-linear and  anti-involutive.

Under the characteristic mapping, the images of $X(z)$ and $X^*(z)$ (denoted by $S(z)$ and $S^*(z)$ respectively) are given by
\begin{align}\label{e:Schurop2}
\iota(X(z))=S(z)&=\mbox{exp} \left( \sum\limits_{n\geq 1} \dfrac{1}{n}p_{n}z^{n} \right) \mbox{exp} \left( -\sum \limits_{n\geq 1} \dfrac{\partial}{\partial p_n}z^{-n} \right)=\sum_{n\in\mathbb{Z}}S_nz^n,\\ \label{e:Schurop*2}
\iota(X^*(z))=S^*(z)&=\mbox{exp} \left(-\sum\limits_{n\geq 1} \dfrac{1}{n}p_{n}z^{n} \right) \mbox{exp} \left(\sum \limits_{n\geq 1} \dfrac{\partial}{\partial p_n}z^{-n} \right)=\sum_{n\in\mathbb{Z}}S^*_nz^{-n}.
\end{align}

Let us recall the vertex operator realization of Schur functions and the relations between $S_{n}$ and $S^*_{m}$.
\begin{prop}\label{p:VOschur}\cite{J1} 
\begin{enumerate}

\item For any composition $\mu=(\mu_{1},\ldots,\mu_{k})$, define
\begin{align}
    S_{\mu}.1:=S_{\mu_{1}}\cdots S_{\mu_{k}}\cdot 1.
\end{align}
Then $S_{\mu}.1=s_{\mu}$. In general, $s_{\mu}=0$ or $\pm s_{\lambda}$ for a partition $\lambda$ such that $\lambda\in \mathfrak{S}_{\ell}(\mu+\delta)-\delta$, where $\delta=(\ell-1,\ell-2,\ldots,1,0)$ and $\ell=\ell(\mu)$. Moreover,
\begin{align}
    S_{-n}.1=\delta_{n,0},
    \qquad
    S^{*}_{n}.1=\delta_{n,0},
    \qquad (n\geq 0).
\end{align}

\item  The components of $S(z)$ and $S^{*}(z)$ obey the following commutation relations:
\begin{align}
S_{m}S_{n}+S_{n-1}S_{m+1}&=0,\\
S^{*}_{m}S^{*}_{n}+S^{*}_{n+1}S^{*}_{m-1}&=0,\\
S_{m}S^{*}_{n}+S^{*}_{n-1}S_{m-1}&=\delta_{m,n}.
\end{align}
\end{enumerate}
\end{prop}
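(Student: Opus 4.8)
The plan is to derive both parts from the factorization of the vertex operators into creation and annihilation halves, together with the elementary contraction formula; throughout I take as given the Heisenberg relations and that $p_n^{*}=n\,\partial/\partial p_n$ is adjoint to multiplication by $p_n$. Write $S(z)=\Gamma_{+}(z)\Gamma_{-}(z)$ and $S^{*}(z)=\Gamma_{+}^{*}(z)\Gamma_{-}^{*}(z)$, where $\Gamma_{+}(z)=\exp\!\big(\sum_{n\ge1}\tfrac1n p_n z^{n}\big)$, $\Gamma_{-}(z)=\exp\!\big(-\sum_{n\ge1}\tfrac{\partial}{\partial p_n}z^{-n}\big)$, and $\Gamma_{\pm}^{*}$ are the sign-reversed versions. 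A single Baker--Campbell--Hausdorff computation, using $[\partial/\partial p_n,p_m]=\delta_{n,m}$ so that the relevant commutator equals the scalar $\log(1-w/z)$, yields $\Gamma_{-}(z)\Gamma_{+}(w)=(1-w/z)\,\Gamma_{+}(w)\Gamma_{-}(z)$ and its analogues, hence the operator product expansions
\begin{align*}
S(z)S(w)&=(1-w/z)\,{:}S(z)S(w){:},\\
S^{*}(z)S^{*}(w)&=(1-w/z)\,{:}S^{*}(z)S^{*}(w){:},\\
S(z)S^{*}(w)&=(1-w/z)^{-1}\,{:}S(z)S^{*}(w){:},
\end{align*}
in which the normal-ordered products are symmetric under $z\leftrightarrow w$.

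For the first two relations in part (2) I would compare $S(z)S(w)$ with $S(w)S(z)$: dividing out the common symmetric factor gives $z\,S(z)S(w)+w\,S(w)S(z)=0$, and reading off the coefficient of $z^{a}w^{b}$ produces $S_{a-1}S_{b}+S_{b-1}S_{a}=0$, which is $S_{m}S_{n}+S_{n-1}S_{m+1}=0$ after setting $m=a-1$, $n=b$. The same computation for $S^{*}$, remembering the expansion $S^{*}(z)=\sum_n S^{*}_n z^{-n}$, gives $S^{*}_{m}S^{*}_{n}+S^{*}_{n+1}S^{*}_{m-1}=0$.

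The mixed relation is the step I expect to cause the most trouble, because the contraction factor $(1-w/z)^{-1}$ must be expanded in the region $|z|>|w|$ for $S(z)S^{*}(w)$ but in $|w|>|z|$ for $S^{*}(w)S(z)$, so the two products are not literally equal series. I would instead form the combination $S(z)S^{*}(w)+\tfrac{z}{w}S^{*}(w)S(z)$ and invoke the formal-distribution identity
\begin{align*}
\Big[\tfrac{1}{1-w/z}\Big]_{|z|>|w|}+\Big[\tfrac{z}{w-z}\Big]_{|w|>|z|}=\delta(w/z),
\end{align*}
together with the collapse ${:}S(z)S^{*}(z){:}=\Gamma_{+}(z)\Gamma_{+}^{*}(z)\Gamma_{-}(z)\Gamma_{-}^{*}(z)=1$ on the support of the delta. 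This reduces the right-hand side to $\delta(w/z)$, whose coefficient of $z^{m}w^{-n}$ is $\delta_{m,n}$, while the left-hand side contributes $S_{m}S^{*}_{n}+S^{*}_{n-1}S_{m-1}$; careful bookkeeping of the index shifts in the delta is the only genuinely technical point.

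Finally, part (1) follows from the same contraction. Since every $\partial/\partial p_n$ kills the constant $1$, one has $\Gamma_{-}(z).1=1$ and $\Gamma_{-}^{*}(z).1=1$, so $S(z).1=\Gamma_{+}(z).1=\sum_{n\ge0}h_n z^{n}$ (the generating series of the complete homogeneous symmetric functions), giving $S_n.1=h_n=s_{(n)}$, $S_{-n}.1=0$ for $n>0$, and dually $S^{*}_n.1=\delta_{n,0}$. Iterating $\Gamma_{-}(z)\Gamma_{+}(w)=(1-w/z)\Gamma_{+}(w)\Gamma_{-}(z)$ and pushing all annihilation halves to the right yields
\begin{align*}
S(z_1)\cdots S(z_k).1=\prod_{1\le i<j\le k}\big(1-z_j/z_i\big)\prod_{i=1}^{k}\Big(\sum_{n\ge0}h_n z_i^{n}\Big).
\end{align*}
Extracting the coefficient of $z_1^{\mu_1}\cdots z_k^{\mu_k}$ recognizes the antisymmetrizing factor $\prod_{i<j}(1-z_j/z_i)$ as the Vandermonde twist by $\delta=(\ell-1,\ldots,1,0)$; this both identifies the coefficient with the Jacobi--Trudi determinant $\det(h_{\mu_i-i+j})=s_\mu$ when $\mu$ is a partition, and, for a general composition, forces $s_\mu=0$ or $\pm s_\lambda$ with $\lambda\in\mathfrak S_\ell(\mu+\delta)-\delta$ by the sign-alternation of that factor.
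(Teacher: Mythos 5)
Your proof is correct: the factorization into creation/annihilation halves, the contraction $\Gamma_{-}(z)\Gamma_{+}(w)=(1-w/z)\Gamma_{+}(w)\Gamma_{-}(z)$, the formal delta-function argument for the mixed relation, and the Jacobi--Trudi extraction for part (1) all check out. The paper itself gives no proof of this proposition (it is quoted from the reference [J91a]), and your argument is essentially the standard vertex-operator derivation found there, so there is nothing to reconcile.
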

\begin{rem}\label{r:Sr1}
For $r\in\mathbb Z$, we write $S_r.1:=S_r\cdot 1$, namely, the action of the operator $S_r$ on the constant symmetric function $1$. Similarly, we write $S_r^*.1:=S_r^*\cdot 1$.
By \eqref{e:Schurop2}, one has
\begin{align}
    S_r.1=
    \begin{cases}
        h_r(x), & \text{if } r\ge 0,\\
        0, & \text{if } r<0,
    \end{cases}
\end{align}
and by \eqref{e:Schurop*2},
\begin{align}
    S_{-k}^*.1=
    \begin{cases}
        (-1)^k e_k(x), & \text{if } k\ge 0,\\
        0, & \text{if } k<0.
    \end{cases}
\end{align}
\end{rem}


    


We can rewrite the character formula \eqref{formula} in terms of vertex operators as follows
\begin{align}\label{formula 2}
\begin{split}
    \chi^{\la}_{\mu}(q)&=\dfrac{q^{|\mu|}}{(q-1)^{\ell(\mu)}}\langle q_{\mu}(x;q^{-1}),S_{\la}.1\rangle \quad \text{(by Proposition \ref{p:VOschur} (1))}\\
    &=\dfrac{q^{|\mu|}}{(q-1)^{\ell(\mu)}}\langle S^*_{\la_1}q_{\mu}(x;q^{-1}),S_{\la^{[1]}}.1\rangle \quad \text{(by  \eqref{e:star})}.
    \end{split}
\end{align}

The following result is first obtained in \cite{JL1} by induction. Here we provide a direct proof.
\begin{prop}\label{S*q}
For a partition $\mu$ and any integer $k$, one has
\begin{align}
    S_k^*q_{\mu}(x;t)=\sum_{\tau\in\mathscr C_\mu}(1-t)^{\ell(\tau)}q_{\mu-\tau}(x;t)S_{k-|\tau|}^*.1,
\end{align}
where
\begin{align}
    q_{\mu-\tau}(x;t):=\prod_{i=1}^{\ell(\mu)}q_{\mu_i-\tau_i}(x;t).
\end{align}
\end{prop}

\begin{proof}
First,
\begin{align*}
S^*(z)Q(w;t)
&=Q(w;t)S^*(z)\exp\left[\sum_{n\geq1}\frac{\partial}{\partial p_n}z^{-n},\sum_{n\geq1}\frac{1-t^n}{n}p_nw^n\right]\\
&=Q(w;t)S^*(z)\frac{z-tw}{z-w}\\
&=Q(w;t)S^*(z)\left(1+(1-t)\sum_{j\geq1}\left(\frac{w}{z}\right)^j\right).
\end{align*}
Write $\mu=(\mu_1,\ldots,\mu_s)$, where $s=\ell(\mu)$. Repeating the above identity gives
\begin{align*}
S^*(z)\prod_{i=1}^{s}Q(w_i;t)
&=\prod_{i=1}^{s}Q(w_i;t)S^*(z)\prod_{i=1}^{s}\left(1+(1-t)\sum_{j\geq1}\left(\frac{w_i}{z}\right)^j\right)\\
&=\prod_{i=1}^{s}Q(w_i;t)S^*(z)
\sum_{j_1,\ldots,j_s\geq0}(1-t)^{\#\{i\mid j_i>0\}}
\frac{w_1^{j_1}\cdots w_s^{j_s}}{z^{j_1+\cdots+j_s}}.
\end{align*}
Now let both sides act on $1$ and take the coefficient of
\[
z^{-k}w_1^{\mu_1}\cdots w_s^{\mu_s}.
\]
The left-hand side gives $S_k^*q_{\mu}(x;t)$. On the right-hand side, choosing $j_i=\tau_i$ with $0\leq \tau_i\leq \mu_i$ contributes
\[
(1-t)^{\ell(\tau)}q_{\mu-\tau}(x;t)S_{k-|\tau|}^*.1.
\]
Summing over all $\tau\in\mathscr C_\mu$ completes the proof.
\end{proof}


\section{Main results}\label{s:OV}
In this section, we will present our main result: a new dual Murnaghan-Nakayama rule for the Hecke algebra in type $A$. Before that, we need to establish the combinatorial interpretation for the coefficient $C_{m,\rho}$ defined by 
\begin{align}
    e_m(x)=\sum_{\rho\vdash m}C_{m,\rho}q_{\rho}(x;t).
\end{align}



\begin{lem}\label{l:S*}
For any integer $n$, with the convention $\mathscr B_n=\varnothing$ for $n<0$, one has
\begin{align}\label{e:S*}
S^*_{-n}.1=\sum_{\b\in \mathscr{B}_n} \prod_{i=1}^{\ell(\b)}\frac{1}{q^{-r_i(\b)}-1}\,q_{\b}(x;q^{-1}),
\end{align}
where
\[
q_{\b}(x;q^{-1}):=q_{b_1}(x;q^{-1})q_{b_2}(x;q^{-1})\cdots
\]
for $\b=(b_1,b_2,\ldots)$.
\end{lem}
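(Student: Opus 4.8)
The plan is to reduce the statement to a purely symmetric-function identity and then prove it by setting up a recursion in $n$ for the right-hand side and matching it against the recursion satisfied by $S^*_{-n}(x).1$. Throughout I would write $t=q^{-1}$, so that $q^{-r_i(\b)}=t^{r_i(\b)}$. First I would pin down the left-hand side explicitly: since the annihilation part $\exp\!\big(\sum_{n\ge1}\frac{\partial}{\partial p_n}z^{-n}\big)$ fixes the vacuum $1$, formula \eqref{e:Schurop*2} gives $S^*(z).1=\exp\!\big(-\sum_{n\ge1}\frac1n p_nz^n\big)=\prod_i(1-x_iz)=\sum_{n\ge0}(-1)^n e_n(x)z^n$; extracting the coefficient of $z^{n}$ recovers $S^*_{-n}(x).1=(-1)^n e_n(x)$, consistent with \eqref{e:S.1}. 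The cases $n\le 0$ are immediate (both sides vanish for $n<0$, since $\mathscr B_n=\varnothing$, and both equal $1$ for $n=0$), so the content is the identity $(-1)^n e_n(x)=\sum_{\b\in\mathscr B_n}\prod_{i=1}^{\ell(\b)}\frac{1}{t^{r_i(\b)}-1}\,q_{\b}(x;t)$ for $n\ge1$.

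Second, writing $R_n$ for the right-hand side, the key device is to peel off the last brick. If $\b\in\mathscr B_n$ has final brick of length $j$, then $r_{\ell(\b)}(\b)=n$ is forced, and deleting this brick gives a bijection onto brick tabloids $\b'\in\mathscr B_{n-j}$ satisfying $r_i(\b')=r_i(\b)$ for $i<\ell(\b)$. This isolates the $n$-dependent factor $\frac{1}{t^n-1}$ outside the remaining product and yields the linear recursion $(t^n-1)R_n=\sum_{j=1}^n q_j(x;t)\,R_{n-j}$, with $R_0=1$.

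Third, I would verify that $(-1)^n e_n(x)$ obeys the same recursion and initial condition. Packaging $R_n$ into $G(z)=\sum_{n\ge0}R_nz^n$, the recursion is equivalent to the functional equation $G(tz)=Q(z;t)G(z)$ with $G(0)=1$, where $Q(z;t)$ is as in \eqref{e:generating-q}; indeed summing the recursion gives $G(tz)-G(z)=(Q(z;t)-1)G(z)$. Using the product form $Q(z;t)=\prod_i\frac{1-tx_iz}{1-x_iz}$, the candidate $F(z)=\prod_i(1-x_iz)=\sum_n(-1)^ne_n(x)z^n$ satisfies $F(tz)=\prod_i(1-tx_iz)=Q(z;t)F(z)$ as well. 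Since $t^n-1\neq 0$ over $\mathbb C(t)$, the recursion (equivalently, the functional equation) determines its power-series solution uniquely from the initial term, whence $G=F$ and $R_n=(-1)^n e_n(x)=S^*_{-n}(x).1$.

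The main obstacle is that the weight $\prod_{i}\frac{1}{t^{r_i(\b)}-1}$ couples the bricks through the cumulative sums $r_i(\b)$ rather than the individual lengths $b_i$, so the sum over $\mathscr B_n$ does not factor into independent per-brick contributions and cannot be read off from a naive product or exponential generating function. The last-brick peeling is precisely what decouples the offending $n$-dependent factor and turns the combinatorial sum into a one-line recursion; once that is in place, the only remaining input is the elementary product-formula identity $F(tz)=Q(z;t)F(z)$.
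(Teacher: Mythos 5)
Your proposal is correct and is essentially the paper's proof: both rest on the last-brick decomposition $\mathscr{B}_n=\biguplus_{j}\mathscr{B}^{(j)}_{n}$ and the functional equation $S^*(q^{-1}z).1=Q(z;q^{-1})S^*(z).1$ (which you phrase equivalently as $F(tz)=Q(z;t)F(z)$ for $F(z)=\sum_n(-1)^ne_n(x)z^n$), yielding the recursion $(t^n-1)R_n=\sum_{j=1}^nq_jR_{n-j}$. The only cosmetic difference is that the paper runs this as an induction on $n$ directly on $S^*_{-n}.1$, whereas you separate it into matching recursions plus uniqueness.
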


\begin{proof}
If $n<0$, then $S^*_{-n}.1=0$ by Remark \ref{r:Sr1}, and the right-hand side is empty. If $n=0$, both sides are equal to $1$. Hence it suffices to consider $n>0$. We abbreviate $q_j(x;q^{-1})$ by $q_j$.

By \eqref{e:generating-q} and \eqref{e:Schurop*2},
\begin{align}
S^*(q^{-1}z).1=Q(z;q^{-1})S^*(z).1.
\end{align}
Comparing the coefficients of $z^n$ on both sides gives
\begin{align}
(q^{-n}-1)S^*_{-n}.1=q_1S^*_{-n+1}.1+q_2S^*_{-n+2}.1+\cdots+q_{n-1}S^*_{-1}.1+q_n.
\end{align}
Equivalently,
\begin{align}
S^*_{-n}.1=\frac{1}{q^{-n}-1}\left(q_1S^*_{-n+1}.1+q_2S^*_{-n+2}.1+\cdots+q_{n-1}S^*_{-1}.1+q_n\right).
\end{align}
Assume inductively that \eqref{e:S*} holds for all positive integers smaller than $n$. Then
\begin{align*}
S^*_{-n}.1
=&\frac{1}{q^{-n}-1}\Bigg(
q_1\sum_{\b\in \mathscr B_{n-1}}\prod_{i=1}^{\ell(\b)}\frac{1}{q^{-r_i(\b)}-1}q_{\b}
+\cdots
+q_{n-1}\sum_{\b\in \mathscr B_{1}}\prod_{i=1}^{\ell(\b)}\frac{1}{q^{-r_i(\b)}-1}q_{\b}
+q_n\Bigg).
\end{align*}
For each $1\leq j\leq n$, multiplication by $q_j/(q^{-n}-1)$ appends a last brick of length $j$. Hence
\begin{align}
\frac{q_j}{q^{-n}-1}\sum_{\b\in \mathscr B_{n-j}}\prod_{i=1}^{\ell(\b)}\frac{1}{q^{-r_i(\b)}-1}q_{\b}
=
\sum_{\b\in \mathscr B_n^{(j)}}\prod_{i=1}^{\ell(\b)}\frac{1}{q^{-r_i(\b)}-1}q_{\b},
\end{align}
where $\mathscr B_n^{(j)}$ denotes the set of brick tabloids of $n$ whose last brick has length $j$. Since
\[
\mathscr B_n=\bigsqcup_{j=1}^{n}\mathscr B_n^{(j)},
\]
the desired identity follows.
\end{proof}

\begin{cor}\label{c:Cmrho}
Suppose $m\geq 0$ and $\rho\vdash m$. Then
\begin{align}
   C_{m,\rho}=(-1)^m\sum_{\b\in\mathscr{B}_{\rho}}\prod_{i=1}^{\ell(\b)}\frac{1}{t^{r_i(\b)}-1},
\end{align}
where $\mathscr{B}_{\rho}$ denotes the set of brick tabloids of type $\rho$.
\end{cor}

\begin{proof}
By Remark \ref{r:Sr1}, we have
\begin{align}
    e_m(x)=(-1)^mS^*_{-m}.1.
\end{align}
Applying Lemma \ref{l:S*} with $q=t^{-1}$, we obtain
\begin{align}
    e_m(x)=(-1)^m\sum_{\b\in\mathscr B_m}\prod_{i=1}^{\ell(\b)}\frac{1}{t^{r_i(\b)}-1}\,q_\b(x;t).
\end{align}
Grouping the above sum according to the type $\rho$ of $\b$, we get
\begin{align}
e_m(x)=\sum_{\rho\vdash m}\left[(-1)^m\sum_{\b\in\mathscr B_{\rho}}\prod_{i=1}^{\ell(\b)}\frac{1}{t^{r_i(\b)}-1}\right]q_{\rho}(x;t).
\end{align}
Comparing this with
\[
e_m(x)=\sum_{\rho\vdash m}C_{m,\rho}q_{\rho}(x;t)
\]
yields the claimed formula.
\end{proof}

\begin{exmp}\label{ex:Cmrho}
The first coefficients $C_{m,\rho}$ can be read off directly from Corollary \ref{c:Cmrho}. For $m=2$, the brick tabloids of $2$ are $(2)$ and $(1,1)$, so
\begin{align}
e_2(x)=\frac{1}{t^2-1}q_2(x;t)+\frac{1}{(t-1)(t^2-1)}q_{1,1}(x;t).
\end{align}
For $m=3$, the brick tabloids are $(3)$, $(2,1)$, $(1,2)$, and $(1,1,1)$. Grouping them by type gives
\begin{align}
e_3(x)
=&-\frac{1}{t^3-1}q_3(x;t) \notag\\
&-\left(\frac{1}{(t^2-1)(t^3-1)}+\frac{1}{(t-1)(t^3-1)}\right)q_{2,1}(x;t) \notag\\
&-\frac{1}{(t-1)(t^2-1)(t^3-1)}q_{1,1,1}(x;t).
\end{align}
\end{exmp}




\begin{thm}\label{iterative}
For $\lambda,\mu\vdash n$, the dual Murnaghan-Nakayama rule for the irreducible characters of the Hecke algebra of type $A$ is
\begin{align}
\chi^{\lambda}_{\mu}(q)
=\sum_{\tau\in \mathscr C_{\mu}}q^{\lambda_1-\ell(\tau)}
\sum_{\b\in\mathscr{B}_{|\tau|-\lambda_1}}(q-1)^{\ell(\tau)-\ell(\mu)+\ell(\mu-\tau)+\ell(\b)}
\prod_{j=1}^{\ell(\b)}\frac{1}{q^{-r_j(\b)}-1}\chi^{\lambda^{[1]}}_{(\mu-\tau)\cup\b}(q),
\end{align}
where $(\mu-\tau)\cup\b$ denotes the partition obtained by rearranging the nonzero parts of $\mu-\tau$ together with the brick lengths of $\b$.
\end{thm}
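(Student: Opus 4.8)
The plan is to peel off the first part $\la_1$ of the upper partition $\la$ directly inside the vertex-operator form of the character formula \eqref{formula 2}, commuting $S^*_{\la_1}$ past $q_{\mu}(x;q^{-1})$ by Proposition \ref{S*q}, expanding the leftover $S^*$-term into brick tabloids by Lemma \ref{l:S*}, and finally reinterpreting the resulting inner products as character values attached to the truncated partition $\la^{[1]}$.

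First I would start from the second line of \eqref{formula 2},
\[
\chi^{\la}_{\mu}(q)=\frac{q^{|\mu|}}{(q-1)^{\ell(\mu)}}\langle S^*_{\la_1}q_{\mu}(x;q^{-1}),S_{\la^{[1]}}.1\rangle,
\]
and apply Proposition \ref{S*q} with $t=q^{-1}$ and $k=\la_1$ to obtain
\[
S^*_{\la_1}q_{\mu}(x;q^{-1})=\sum_{\tau\in\mathscr C_{\mu}}(1-q^{-1})^{\ell(\tau)}\,q_{\mu-\tau}(x;q^{-1})\,S^*_{\la_1-|\tau|}.1.
\]
Next I would invoke Lemma \ref{l:S*} with $n=|\tau|-\la_1$ to expand each factor $S^*_{\la_1-|\tau|}.1=S^*_{-(|\tau|-\la_1)}.1$ as a sum over $\b\in\mathscr B_{|\tau|-\la_1}$. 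Since $\mathscr B_m=\varnothing$ for $m<0$, this automatically kills every term with $|\tau|<\la_1$, so only $\tau$ with $|\tau|\geq\la_1$ survive. Because $q_{\rho}$ is multiplicative in its parts and $q_0=1$, the product $q_{\mu-\tau}q_{\b}$ collapses to $q_{(\mu-\tau)\cup\b}(x;q^{-1})$.

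Then I would substitute these expansions back, exchange the finite sums with the inner product, and read the factor $\langle q_{(\mu-\tau)\cup\b}(x;q^{-1}),S_{\la^{[1]}}.1\rangle$ through the first line of \eqref{formula 2} applied to the partition $\la^{[1]}$ and the composition $(\mu-\tau)\cup\b$, giving
\[
\langle q_{(\mu-\tau)\cup\b}(x;q^{-1}),S_{\la^{[1]}}.1\rangle=\frac{(q-1)^{\ell((\mu-\tau)\cup\b)}}{q^{|(\mu-\tau)\cup\b|}}\,\chi^{\la^{[1]}}_{(\mu-\tau)\cup\b}(q),
\]
where the weights match since $|(\mu-\tau)\cup\b|=|\mu|-\la_1=|\la^{[1]}|$. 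The remaining work is exponent bookkeeping. Using $|(\mu-\tau)\cup\b|=|\mu|-\la_1$ and $\ell((\mu-\tau)\cup\b)=\ell(\mu-\tau)+\ell(\b)$ (the zero parts of $\mu-\tau$ contribute $q_0=1$ and do not enter the length), together with $1-q^{-1}=(q-1)/q$, I would collect the prefactors: the powers of $q$ combine as $q^{|\mu|}\cdot q^{-\ell(\tau)}\cdot q^{-(|\mu|-\la_1)}=q^{\la_1-\ell(\tau)}$, while the powers of $(q-1)$ combine as $(q-1)^{-\ell(\mu)+\ell(\tau)+\ell(\mu-\tau)+\ell(\b)}$, leaving the brick-tabloid product $\prod_{j=1}^{\ell(\b)}\big(q^{-r_j(\b)}-1\big)^{-1}$ from Lemma \ref{l:S*} untouched. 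This reproduces the asserted identity verbatim.

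The main, and essentially the only, obstacle is this bookkeeping: one must correctly split $(1-q^{-1})^{\ell(\tau)}$ into its $q$- and $(q-1)$-contributions and verify the additivity of both weight and length under the concatenation $(\mu-\tau)\cup\b$, taking care that vanishing parts of $\mu-\tau$ are not counted in $\ell(\mu-\tau)$. All of the representation-theoretic and vertex-operator content is already packaged in \eqref{formula 2}, Proposition \ref{S*q}, and Lemma \ref{l:S*}, so no further analysis is needed beyond chaining these three results.
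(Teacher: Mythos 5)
Your proposal is correct and follows essentially the same route as the paper's own proof: starting from \eqref{formula 2}, applying Proposition \ref{S*q} with $t=q^{-1}$, expanding $S^*_{\la_1-|\tau|}.1$ via Lemma \ref{l:S*}, and then simplifying the prefactors. Your exponent bookkeeping for the powers of $q$ and $(q-1)$ is exactly the ``simplification'' the paper leaves implicit, and it checks out.
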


\begin{proof}
By \eqref{formula 2}, Proposition \ref{S*q}, and Lemma \ref{l:S*}, we have
\begin{align*}
\chi^{\lambda}_{\mu}(q)
&=\frac{q^n}{(q-1)^{\ell(\mu)}}\sum_{\tau\in\mathscr C_\mu}(1-q^{-1})^{\ell(\tau)}
\left\langle q_{\mu-\tau}(x;q^{-1})S^*_{\lambda_1-|\tau|}.1,S_{\lambda^{[1]}}.1\right\rangle \\
&=\frac{q^n}{(q-1)^{\ell(\mu)}}\sum_{\tau\in\mathscr C_\mu}(1-q^{-1})^{\ell(\tau)}
\sum_{\b\in\mathscr B_{|\tau|-\lambda_1}}
\prod_{j=1}^{\ell(\b)}\frac{1}{q^{-r_j(\b)}-1}
\left\langle q_{(\mu-\tau)\cup\b}(x;q^{-1}),S_{\lambda^{[1]}}.1\right\rangle.
\end{align*}
Now
\[
|(\mu-\tau)\cup\b|
=|\mu|-|\tau|+|\tau|-\lambda_1
=n-\lambda_1
=|\lambda^{[1]}|.
\]
Hence, by \eqref{formula},
\[
\left\langle q_{(\mu-\tau)\cup\b}(x;q^{-1}),S_{\lambda^{[1]}}.1\right\rangle
=
\frac{(q-1)^{\ell((\mu-\tau)\cup\b)}}{q^{\,n-\lambda_1}}
\chi^{\lambda^{[1]}}_{(\mu-\tau)\cup\b}(q).
\]
Since
\[
\ell((\mu-\tau)\cup\b)=\ell(\mu-\tau)+\ell(\b),
\]
substituting this into the previous formula and using
\[
1-q^{-1}=\frac{q-1}{q},
\]
we obtain
\begin{align*}
\chi^{\lambda}_{\mu}(q)
=\sum_{\tau\in \mathscr C_{\mu}}q^{\lambda_1-\ell(\tau)}
\sum_{\b\in\mathscr{B}_{|\tau|-\lambda_1}}(q-1)^{\ell(\tau)-\ell(\mu)+\ell(\mu-\tau)+\ell(\b)}
\prod_{j=1}^{\ell(\b)}\frac{1}{q^{-r_j(\b)}-1}\chi^{\lambda^{[1]}}_{(\mu-\tau)\cup\b}(q),
\end{align*}
which is the desired recursion.
\end{proof}

\begin{rem}\label{r:efficiency}
Theorem \ref{iterative} gives a genuine recursive algorithm. Each application replaces the upper partition $\lambda$ by $\lambda^{[1]}$, so after at most $\ell(\lambda)-1$ steps one reaches a one-row partition. For a one-row partition $(m)$, one has
\[
\chi^{(m)}_{\nu}(q)=q^{m-\ell(\nu)}.
\]
The main advantage of Theorem \ref{iterative} lies in its direct upper-partition recursion. In particular, when $\lambda_1$ is large, only terms with $|\tau|\ge \lambda_1$ contribute, and one application of the theorem reduces the computation from a character of $H_n(q)$ to a character of $H_{n-\lambda_1}(q)$. This feature is illustrated in Examples \ref{ex:example6} and \ref{ex:811}.
\end{rem}


\begin{exmp}\label{ex:example6}
Let $\lambda=(3,2,1)$ and $\mu=(4,2)\vdash 6$. By Theorem \ref{iterative}, only the cases $|\tau|=3,4,5,6$ contribute. When $|\tau|=6$, we have $\tau=\mu$ and
\begin{align*}
   \mathscr B_3= \left\{\begin{tikzpicture}[scale=1]
    \coordinate (Origin)   at (0,0);
    \coordinate (XAxisMin) at (0,0);
    \coordinate (XAxisMax) at (11,0);
    \coordinate (YAxisMin) at (0,0);
    \coordinate (YAxisMax) at (0,-11);
    \draw [thin, black] (0,0) -- (3,0);
    \draw [thin, black] (0,1) --(3,1);
    \draw [thin, black] (0,0) -- (0,1);
    \draw [thin, black] (1,0) -- (1,1);
    \draw [thin, black] (2,0) -- (2,1);
    \draw [thin, black] (3,0) -- (3,1);
      \filldraw [fill=green, fill opacity=0.6, rounded corners]
    (0.25,0.3) rectangle (0.75,0.7) (1.25,0.3) rectangle (1.75,0.7) (2.25,0.3) rectangle (2.75,0.7);
     \end{tikzpicture},\quad
     \begin{tikzpicture}[scale=1]
    \coordinate (Origin)   at (0,0);
    \coordinate (XAxisMin) at (0,0);
    \coordinate (XAxisMax) at (11,0);
    \coordinate (YAxisMin) at (0,0);
    \coordinate (YAxisMax) at (0,-11);
    \draw [thin, black] (0,0) -- (3,0);
    \draw [thin, black] (0,1) --(3,1);
    \draw [thin, black] (0,0) -- (0,1);
    \draw [thin, black] (1,0) -- (1,1);
    \draw [thin, black] (2,0) -- (2,1);
    \draw [thin, black] (3,0) -- (3,1);
      \filldraw [fill=green, fill opacity=0.6, rounded corners]
    (0.5,0.3) rectangle (1.5,0.7) (2.25,0.3) rectangle (2.75,0.7);
     \end{tikzpicture},\quad
      \begin{tikzpicture}[scale=1]
    \coordinate (Origin)   at (0,0);
    \coordinate (XAxisMin) at (0,0);
    \coordinate (XAxisMax) at (11,0);
    \coordinate (YAxisMin) at (0,0);
    \coordinate (YAxisMax) at (0,-11);
    \draw [thin, black] (0,0) -- (3,0);
    \draw [thin, black] (0,1) --(3,1);
    \draw [thin, black] (0,0) -- (0,1);
    \draw [thin, black] (1,0) -- (1,1);
    \draw [thin, black] (2,0) -- (2,1);
    \draw [thin, black] (3,0) -- (3,1);
      \filldraw [fill=green, fill opacity=0.6, rounded corners]
    (0.25,0.3) rectangle (0.75,0.7) (1.5,0.3) rectangle (2.5,0.7);
     \end{tikzpicture},\quad
      \begin{tikzpicture}[scale=1]
    \coordinate (Origin)   at (0,0);
    \coordinate (XAxisMin) at (0,0);
    \coordinate (XAxisMax) at (11,0);
    \coordinate (YAxisMin) at (0,0);
    \coordinate (YAxisMax) at (0,-11);
    \draw [thin, black] (0,0) -- (3,0);
    \draw [thin, black] (0,1) --(3,1);
    \draw [thin, black] (0,0) -- (0,1);
    \draw [thin, black] (1,0) -- (1,1);
    \draw [thin, black] (2,0) -- (2,1);
    \draw [thin, black] (3,0) -- (3,1);
      \filldraw [fill=green, fill opacity=0.6, rounded corners]
    (0.5,0.3) rectangle (2.5,0.7);
     \end{tikzpicture}\right\}.
\end{align*}
Therefore the corresponding contribution is
\begin{align*}
&\frac{q(q-1)^3}{(q^{-1}-1)(q^{-2}-1)(q^{-3}-1)}\chi^{(2,1)}_{(1^3)}(q) \\
&+\frac{q(q-1)^2}{(q^{-1}-1)(q^{-3}-1)}\chi^{(2,1)}_{(2,1)}(q)
+\frac{q(q-1)^2}{(q^{-2}-1)(q^{-3}-1)}\chi^{(2,1)}_{(2,1)}(q) \\
&+\frac{q(q-1)}{q^{-3}-1}\chi^{(2,1)}_{(3)}(q)=0,
\end{align*}
where we used $\chi^{(2,1)}_{(1^3)}(q)=2$, $\chi^{(2,1)}_{(2,1)}(q)=q-1$, and $\chi^{(2,1)}_{(3)}(q)=-q$. Similarly, the contributions corresponding to $|\tau|=5,4,3$ are
\[
2q^4-2q^3,\qquad -4q^4+7q^3-3q^2,\qquad 2q^4-6q^3+5q^2-q,
\]
respectively. Therefore,
\[
\chi^{(3,2,1)}_{(4,2)}(q)=-q^3+2q^2-q.
\]
In this example, one application of Theorem \ref{iterative} reduces the computation from $H_6(q)$ to characters of $H_3(q)$.
\end{exmp}

\begin{exmp}\label{ex:811}
Let $\lambda=(6,1,1)$ and $\mu=(2,2,2,2)\vdash 8$. This is precisely the kind of example for which Theorem \ref{iterative} is convenient, because $\lambda_1=6$, so one application reduces the problem to characters of $H_2(q)$. The relevant $H_2(q)$-characters are
\[
\chi^{(1,1)}_{(1,1)}(q)=1,\qquad \chi^{(1,1)}_{(2)}(q)=-1.
\]
Only the cases $|\tau|=6,7,8$ contribute.

If $|\tau|=6$, then $\mathscr B_{|\tau|-\lambda_1}=\mathscr B_0=\{\varnothing\}$. Among the $\tau\in\mathscr C_\mu$ with $|\tau|=6$, four are permutations of $(2,2,2,0)$, and each contributes
\[
q^{6-3}(q-1)^{3-4+1}\chi^{(1,1)}_{(2)}(q)=-q^3,
\]
while six are permutations of $(2,2,1,1)$, and each contributes
\[
q^{6-4}(q-1)^{4-4+2}\chi^{(1,1)}_{(1,1)}(q)=q^2(q-1)^2.
\]
Hence the total contribution for $|\tau|=6$ is
\[
-4q^3+6q^2(q-1)^2=6q^4-16q^3+6q^2.
\]

If $|\tau|=7$, then $\mathscr B_1=\{(1)\}$. There are four permutations of $(2,2,2,1)$, and each contributes
\[
q^{6-4}(q-1)^{4-4+1+1}\frac{1}{q^{-1}-1}\chi^{(1,1)}_{(1,1)}(q)=q^3(1-q).
\]
Thus the total contribution for $|\tau|=7$ is
\[
4q^3(1-q)=-4q^4+4q^3.
\]

If $|\tau|=8$, then $\tau=\mu$ and $\mathscr B_2=\{(1,1),(2)\}$. The two contributions are
\begin{align*}
q^{6-4}(q-1)^{4-4+0+2}\frac{1}{(q^{-1}-1)(q^{-2}-1)}\chi^{(1,1)}_{(1,1)}(q)
&=\frac{q^5}{q+1},\\
q^{6-4}(q-1)^{4-4+0+1}\frac{1}{q^{-2}-1}\chi^{(1,1)}_{(2)}(q)
&=\frac{q^4}{q+1},
\end{align*}
whose sum is $q^4$.

Combining the three cases, we obtain
\[
\chi^{(6,1,1)}_{(2,2,2,2)}(q)=3q^4-12q^3+6q^2.
\]
This example shows that when $\lambda_1$ is large, Theorem \ref{iterative} can reduce a character computation in $H_n(q)$ to one in a much smaller Hecke algebra in a single step. Compared with the formulation in \cite{JL1}, the present method is more direct in this example: the transition coefficients from $e_m$ to the basis $\{q_{\rho}\}$ are already encoded explicitly by Corollary \ref{c:Cmrho}, so the recursion can be carried out directly once the relevant brick tabloids are listed. This illustrates the advantage of the present combinatorial refinement.
\end{exmp}

\section*{Acknowledgments}
N.J. is partially supported by the Simons Foundation (No. MP-TSM-00002518) and National Natural Science Foundation of China (No. 12171303). 
N.L. is supported by China Postdoctoral Science Foundation (No. 2025M773076, No. GZC20252018) and Beijing Natural Science Foundation (No. 1264053). 
\bigskip

\end{document}